\documentclass[english,12pt]{article}

\pdfoutput=1


\usepackage[T1]{fontenc}
\usepackage[latin9]{inputenc}
\usepackage{amssymb}
\usepackage{babel}
\usepackage{amsthm}
\usepackage{verbatim}
\usepackage{amsrefs}
\usepackage{amsmath}
\usepackage{listings}
\usepackage{fullpage}
\usepackage{graphicx}
\usepackage{qtree}
\usepackage[usenames,dvipsnames]{color}
\usepackage{array}
\usepackage{enumerate}
\usepackage{polynom}
\usepackage{tikz}
\usepackage{fancyhdr}
\usepackage{etoolbox}
\usepackage{algorithm}
\usepackage{algorithmic}


\newtheorem{prop}{Proposition}
\newtheorem{theorem}{Theorem}
\newtheorem{lemma}{Lemma}
\newtheorem{cor}{Corollary}

\newcommand{\p}[1]{\left(#1\right)}
\newcommand{\st}[1]{\left\{#1\right\}}

\newcommand{\fl}[1]{\left\lfloor#1\right\rfloor}

\newcommand{\quot}[1]{``#1''}

\newcommand{\limf}[3]{\lim_{#1\rightarrow#2}{#3}}

\newcommand{\limi}[2]{\limf{#1}{\infty}{#2}}



\newcommand{\lb}{\hspace*{\fill}}




\DeclareMathOperator{\modd}{mod}

\newcommand{\fwd}{\p{\Longrightarrow}}
\newcommand{\bwd}{\p{\Longleftarrow}}

\newcommand{\iffpf}[2]{
\begin{description}
\item[$\fwd$] #1
\item[$\bwd$] #2
\end{description}
}

\newcommand{\namehead}[3]{
\lstset{breaklines=true, morecomment=[l]{//}, frame=single, showstringspaces=false, numbers=left}
\begin{flushright}
Nathan Fox\\
#2\\
#3\\
\end{flushright}
\ifstrequal{#1}{.}{}{
\begin{center}
{\Large Homework #1}
\end{center}}
}

\renewcommand{\p}[1]{(#1)}
\newcommand{\pb}[1]{\left(#1\right)}

\newcommand{\seq}{\pb}

\newcommand{\Rh}{B}

\begin{document}
%
%
\title{A Slow Relative of Hofstadter's $Q$-Sequence}
\author{Nathan Fox\footnote{Department of Mathematics, Rutgers University, Piscataway, New Jersey,
\texttt{fox@math.rutgers.edu}
}}
\date{}

\maketitle

\begin{abstract}
Hofstadter's $Q$-sequence remains an enigma fifty years after its introduction.  Initially, the terms of the sequence increase monotonically by $0$ or $1$ at a time.  But, $Q\p{12}=8$ while $Q\p{11}=6$, and monotonicity fails shortly thereafter.  In this paper, we add a third term to Hofstadter's recurrence, giving the recurrence $\Rh\p{n}=\Rh\p{n-\Rh\p{n-1}}+\Rh\p{n-\Rh\p{n-2}}+\Rh\p{n-\Rh\p{n-3}}$.  We show that this recurrence, along with a suitable initial condition that naturally generalizes Hofstadter's initial condition, generates a sequence whose terms all increase monotonically by $0$ or $1$ at a time.  Furthermore, we give a complete description of the resulting frequency sequence, which allows the $n^{th}$ term of our sequence to be efficiently computed.  
We conclude by showing that our sequence cannot be easily generalized.
\end{abstract}

\section{Introduction}

The Hofstadter $Q$-sequence~\cite{geb} is defined by the nested recurrence $Q\p{n}=Q\p{n-Q\p{n-1}}+Q\p{n-Q\p{n-2}}$ with the initial conditions $Q\p{1}=Q\p{2}=1$.  The first $11$ terms of this sequence~\cite[A005185]{oeis} are 
\[
1, 1, 2, 3, 3, 4, 5, 5, 6, 6, 6,\ldots
\]
These terms increase monotonically with successive differences either $0$ or $1$.  But, $Q\p{12}=8$, ending the successive difference property.  Not long thereafter, $Q\p{15}=10$ and $Q\p{16}=9$, ending the monotonicity.  Calculating more terms leads one to the resignation that the Hofstadter $Q$-sequence is anything but well-behaved.  While there appear to be some patterns in the sequence, all such observation are as-yet purely emprical.  Essentially nothing has been rigorously proven about this sequence.  Most critically, nobody has been able to prove that $Q\p{n}$ even exists for all $n$.  If $Q\p{n-1}\geq n$ for some $n$, then evaluating $Q\p{n}$ would require knowing $Q\p{k}$ for some $k\leq0$.  Since $Q$ is only defined for positive indices, $Q\p{n}$ (and all subsequent terms) would fail to exist in this case.  If a sequence is finite because of behavior like this, we say that the sequence \emph{dies}.

Hofstadter and Huber~\cite{hofsem,hofv} investigated the following familiy of recurrences, which generalize the Hofstadter $Q$-recurrence.  For integers $0<r<s$, define
\[
Q_{r,s}\p{n}=Q_{r,s}\p{n-Q_{r,s}\p{n-r}}+Q_{r,s}\p{n-Q_{r,s}\p{n-s}}.
\]
They explored these recurrences experimentally for various initial conditions.  This work led them to conjecture that the sequences reulting from an all-ones initial condition always die, except for $\p{r,s}\in\st{\p{1,1},\p{1,4},\p{2,4}}$.  The case $\p{1,1}$ is the $Q$-sequence, and the case $\p{2,4}$, often called the $W$-sequence, displays even wilder behavior than the $Q$-sequence~\cite[A087777]{oeis}.  The sequence resulting from $\p{r,s}=\p{1,4}$, on the other hand, behaves much more regularly.  This sequence, known as the $V$-sequence, was proven to be monotone increasing by $0$ or $1$ at a time~\cite{hofv}.  This growth property is known in the literature as \emph{slow}.

There has been substantial research concerning slow Hofstadter-like sequences. The most famous example is perhaps the Hofstadter-Conway \$10000 Sequence~\cite[A004001]{oeis}, given by $A\p{n}=A\p{A\p{n-1}}+A\p{n-A\p{n-1}}$ with $A\p{1}=A\p{2}=1$.  Conway notably offered a \$10000 prize for an analysis of the behavior of this sequence.  Colin Mallows solved this problem a few years later~\cite{mallows}.  Another prototypical example is Conolly's~\cite{con} recurrence $C\p{n}=C\p{n-C\p{n-1}}+C\p{n-1-C\p{n-2}}$ with $C\p{1}=C\p{2}=1$ as the initial condition~\cite[A046699]{oeis}.  There are many examples of slow sequences that generalize Conolly's recurrence~\cite{erickson, isgur2}, some of which have combinatorial interpretations involving counting leaves in tree structures~\cite{isgur2}.  In addition, given a slow Hofstadter-like sequence, it is possible to generate an infinite family of slow sequences with similar recurrences~\cite{isgur1}.

Most of the known examples of slow sequences have at least one of the following properties:
\begin{itemize}
\item An inner recursive call with a positive coefficient (like the first $A\p{n-1}$ in the Hofstadter-Conway recurrence).
\item A \quot{shift} in at least one of the recurrence terms (like the $-1$ in the second term in Conolly's recurrence).
\end{itemize}
In fact, the only ones that have neither property are the $V$-sequence and sequences constructed from it~\cite{isgur1}.  
We decided to search for additional slow, Hofstadter-like sequences without these properties.  The investigation of Hofstadter and Huber empirically rules out two-term recurrences, so we began our search by considering the generic $3$-term recurrence
\[
Q_{r,s,t}\p{n}=Q_{r,s,t}\p{n-Q_{r,s,t}\p{n-r}}+Q_{r,s,t}\p{n-Q_{r,s,t}\p{n-s}}+Q_{r,s,t}\p{n-Q_{r,s,t}\p{n-t}}
\]
with integers $0<r<s<t$.  The all-ones initial condition proved fruitless in our investigation.  However, the initial conditions $V\p{1}=1$, $V\p{2}=2$, $V\p{3}=3$, $V\p{4}=4$ generate the $V$-sequence as well (offset by $3$ terms)~\cite{hofv}.  Thus, we focused our search on slow sequences with initial conditions of the form $Q_{r,s,t}\p{i}=i$ for $i\leq t$.  This allowed us to find the sequence with $\p{r,s,t}=\p{1,2,3}$.  In this paper, we prove that this sequence is slow.  In fact, we completely characterize the terms of this sequence and exhibit an efficient algorithm for computing the $n^{th}$ term.  In particular, each term of this sequence appears at most twice, in contrast to the $V$-sequence, whose terms appear at most three times~\cite{hofv}.  
In Section~\ref{sec:seq}, we examine this sequence and prove our results about it.  Then, in Section~\ref{sec:fut}, we discuss some future directions, and we show that one potential generalization of our sequence fails to yield other slow sequences.

\section{Our Sequence}\label{sec:seq}

We will consider the sequence defined by the recurrence
\[
\Rh\p{n}=\Rh\p{n-\Rh\p{n-1}}+\Rh\p{n-\Rh\p{n-2}}+\Rh\p{n-\Rh\p{n-3}}
\]
and the initial conditions $\Rh\p{1}=1$, $\Rh\p{2}=2$, $\Rh\p{3}=3$, $\Rh\p{4}=4$, $\Rh\p{5}=5$.  The first few terms of this sequence (A278055 in OEIS) are
\[
1, 2, 3, 4, 5, 6, 6, 7, 8, 9, 9, 10, 11, 12, 12, 13, 14, 15, 15, 16, 17, 17, 18, 18, 19, 20, 21, 21,\ldots
\]
The main thing we wish to prove is the following:
\begin{theorem}\label{thm:slow}
For all $n$, $\Rh\p{n}-\Rh\p{n-1}\in\st{0,1}$.  In other words, the sequence $\seq{\Rh\p{n}}_{n\geq1}$ is slow.
\end{theorem}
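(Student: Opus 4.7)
The plan is to prove this by strong induction on $n$, but with a substantially strengthened inductive hypothesis.  The indices where $\Rh\p{n}=\Rh\p{n-1}$ (visible in the listing as $7,11,15,19,22,24,28,\ldots$) do not form an obvious arithmetic or periodic pattern, and $\Rh\p{n}$ itself depends on the values $\Rh\p{n-\Rh\p{n-k}}$ for $k=1,2,3$.  A bare induction on slowness alone therefore cannot close; instead I would carry the induction out jointly with the explicit description of where repeats occur---the frequency sequence promised in the abstract.

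The core of each inductive step is the decomposition
\[
\Rh\p{n}-\Rh\p{n-1}=\sum_{k=1}^{3}\bk{\Rh\p{n-\Rh\p{n-k}}-\Rh\p{n-1-\Rh\p{n-k-1}}}.
\]
Writing $\Delta_k=\Rh\p{n-k}-\Rh\p{n-k-1}\in\st{0,1}$ by the inductive hypothesis, the $k$th summand equals $0$ whenever $\Delta_k=1$ (the two inner arguments coincide) and equals $\Rh\p{m}-\Rh\p{m-1}\in\st{0,1}$ whenever $\Delta_k=0$ (the inner arguments differ by exactly one).  The total difference therefore always lies in $\st{0,1,2,3}$, and the entire remaining burden is to rule out the values $2$ and $3$.

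This ruling-out is the main obstacle, and it cannot be dispatched by simply bounding how many of the $\Delta_k$ are zero: the data already exhibits windows (around $n=25$, for instance) where two $\Delta_k$ vanish and the sum nevertheless equals $1$, because only one of the corresponding inner differences ends up being $1$.  The plan is to leverage the frequency-sequence characterization as an invariant that predicts, for every local configuration of $\Delta_k$'s that can occur near a given $n$, which of the inner indices $n-\Rh\p{n-k}$ are themselves repeat positions and which are increase positions.  A finite case analysis on these local configurations---the feasible ones being exactly those permitted by the frequency sequence---should then force the sum to lie in $\st{0,1}$.  Together with direct computation of the first few terms as base cases, this propagates both slowness and the frequency-sequence description forward in lockstep, which is why I would prove Theorem~\ref{thm:slow} not in isolation but as one strand of a joint induction with the structural results also developed in Section~\ref{sec:seq}.
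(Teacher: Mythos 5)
Your overall strategy---a joint strong induction that carries the frequency-sequence characterization alongside slowness---is the same one the paper uses, and your opening reduction is sound: writing $\Rh\p{n}-\Rh\p{n-1}$ as a sum of three inner differences, each of which vanishes when $\Delta_k=1$ and lies in $\st{0,1}$ when $\Delta_k=0$, correctly bounds the difference in $\st{0,1,2,3}$. (The paper never forms this telescoped difference at all; it computes $\Rh\p{n}$ exactly in each case. But your reduction is a legitimate alternative starting point.)

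The genuine gap is in how you propose to rule out the values $2$ and $3$. You describe this as ``a finite case analysis on local configurations'' of the $\Delta_k$ near $n$, with the frequency sequence predicting which inner indices are repeat positions. But the inner indices $n-\Rh\p{n-k}$ sit near $n/3$, not near $n$, so no analysis of configurations local to $n$ can determine the inner differences; what is needed is a quantitative link between the repeat structure at value $m=\Rh\p{n}$ and the repeat structure at value $m/3$. This is exactly the content the paper has to build and that your proposal never identifies: with $R\p{m}$ denoting the number of repeated values below $m$, one shows that the inner indices are precisely $R\p{m}+1$, $R\p{m}+2$, $R\p{m}+3$ (up to the case split), that $\frac{m}{3}+R\!\pb{\frac{m}{3}}$ equals $R\p{m}+1$ or $R\p{m}+2$ according to whether $m-1$ repeats, and that $m-1$ repeats if and only if $\frac{m}{3}$ does---a base-$3$ self-similarity of the witness sequence $a_i=\frac{5}{2}3^{i-1}+\frac{1}{2}$. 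Only with these arithmetic facts can one evaluate the three inner terms (they come out to $\frac{m}{3}-1,\frac{m}{3},\frac{m}{3}+1$ or $\frac{m}{3},\frac{m}{3},\frac{m}{3}$, etc., summing to $m$ in every case) and thereby close the induction. Without identifying this mechanism, your plan reduces the theorem to its hardest step and then asserts that step can be done; as written, the induction does not close.
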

We will actually prove considerably more than just Theorem~\ref{thm:slow}.  We will completely determine the structure of this sequence.  In the terms listed above, each positive integer appears no more than twice (and at least once).  We will show that this is the case for all numbers, and we will completely characterize which numbers repeat.

We will make use of the following auxiliary sequence $\seq{a_i}_{i\geq1}$.  Let $a_1=3$, and for $i\geq1$, let $a_i=3a_{i-1}-1$.  (This is sequence A057198 in OEIS.)  This sequence has the closed form $a_i=\frac{5}{2}3^{i-1}+\frac{1}{2}$.  We have the following theorem.
\begin{theorem}\label{thm:struct}
Let $m$ be a positive integer.  If there exists some integer $k\geq1$ such that $m=k\cdot3^i+a_i$ for some $i\geq1$, then $m$ appears in the $\Rh$-sequence twice.  Otherwise, $m$ appears once.  Furthermore, the $\Rh$-sequence is monotone increasing.
\end{theorem}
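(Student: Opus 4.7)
The plan is to construct the sequence explicitly from the structure asserted by the theorem and then verify by strong induction that it satisfies the $\Rh$-recurrence, concluding equality with $\Rh$ by uniqueness. Set $D = \st{k\cdot 3^i + a_i : i \geq 1, k \geq 1}$. A short $3$-adic computation using the identity $2\p{a_i - a_j} = 5 \cdot 3^{j-1}\p{3^{i-j}-1}$ for $i>j$ shows that the representation $v = k\cdot 3^i + a_i$ is unique, so each $v \in D$ has a well-defined \emph{level} $i\p{v}$. Define the position function $f\p{v} = v + \card\st{m : 1 \leq m < v, m \in D}$ and let $\tilde\Rh$ be the sequence in which every $v \notin D$ appears once (at position $f\p{v}$) and every $v \in D$ appears twice (at positions $f\p{v}$ and $f\p{v}+1$). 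Then $\tilde\Rh$ is weakly increasing with successive differences in $\st{0,1}$. Since $D$ contains no integer at most $5$, $\tilde\Rh\p{i} = i$ for $1 \leq i \leq 5$, matching the initial conditions, so it suffices to verify that $\tilde\Rh$ satisfies the $\Rh$-recurrence for every $n \geq 6$.

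For the inductive step, fix $n \geq 6$ and set $v := \tilde\Rh\p{n}$. I split into three cases: (I) $n = f\p{v}$ with $v \notin D$; (II) $n = f\p{v}$ with $v \in D$; (III) $n = f\p{v}+1$ with $v \in D$. In cases (II) and (III) I further stratify by the level $i\p{v}$. In each (sub)case I read off $\tilde\Rh\p{n-1}$, $\tilde\Rh\p{n-2}$, $\tilde\Rh\p{n-3}$ from the membership of $v$, $v-1$, $v-2$ in $D$, express each of the three parent indices $n - \tilde\Rh\p{n-j}$ in closed form in terms of $v$, $k$, and $i\p{v}$, and use the inductive hypothesis to evaluate $\tilde\Rh$ at each parent. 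The three values are then summed and shown to equal $v$ via a short arithmetic calculation driven by the recursion $a_i = 3 a_{i-1} - 1$.

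The principal obstacle is identifying the parent indices and their $\tilde\Rh$-values. The intended picture is that for $v = k\cdot 3^i + a_i$ with $i \geq 2$, each parent index corresponds to a level-$\p{i-1}$ element of $D$, so the recurrence reduces a level-$i$ check to a level-$\p{i-1}$ check; the constant $-1$ in $a_i = 3a_{i-1} - 1$ is exactly the net offset introduced by doubled values strictly between the parents and $n$. Careful bookkeeping will be needed near the boundaries $v = a_i$ (where the level increments) and when a parent index itself lands on a second-occurrence position, but in each subcase the verification reduces to a short explicit computation using the closed form $a_i = \frac{5}{2} \cdot 3^{i-1} + \frac{1}{2}$. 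Once the recurrence is checked, slowness and monotonicity follow because $\tilde\Rh$ was built to be slow and weakly increasing.
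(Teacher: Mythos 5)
Your plan is essentially the paper's own proof: your position function $f\p{v}=v+\card\st{m<v:m\in D}$ is the paper's $m+R\p{m}$, your uniqueness-of-representation step is its Lemma~\ref{lem:onei}, your observation that each parent index of a level-$i$ value corresponds to a level-$\p{i-1}$ element with the $-1$ in $a_i=3a_{i-1}-1$ accounting for the offset is exactly the content of its Lemmas~\ref{lem:rmi}--\ref{lem:brs}, and both arguments finish with the same induction on the index split into cases by first/second occurrence and by whether $v-1$ and $v-2$ repeat. The \quot{careful bookkeeping near the boundaries} that you defer is precisely the substance of those lemmas and of the paper's seven-case verification, so the outline is sound and matches the published argument rather than offering an alternative route.
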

Theorem~\ref{thm:struct} implies Theorem~\ref{thm:slow}, since Theorem~\ref{thm:struct} asserts both that the sequence is monotone and that each positive integer appears in the sequence.  Throughout the rest of this section, we will end up proving Theorem~\ref{thm:struct}, and consequently Theorem~\ref{thm:slow}, by induction.  In doing so, we will frequently assume that Theorem~\ref{thm:struct} holds up to some point.  To make this clear, we will define the following indexed families of propositions (where $m$ and $n$ are positive integers):
\begin{itemize}
\item Let $P_m$ denote the proposition \quot{For all integers $1\leq m'\leq m$, if there exists some integer $k\geq1$ such that $m'=k\cdot3^i+a_i$ for some $i\geq1$, then $m'$ appears in the $\Rh$-sequence twice.  Otherwise, $m'$ appears once.  Furthermore, the $
\Rh$-sequence is monotone increasing as long as its terms are at most $m$.}  In this way, $P_m$ is essentially the statement \quot{Theorem~\ref{thm:struct} holds through \emph{value} $m$.}
\item Let $T_n$ denote the proposition \quot{The first $n$ terms of the $\Rh$-sequence are monotone increasing.  Furthermore, for all $m$ appearing as one of these first $n$ terms, if there exists some integer $k\geq1$ such that $m=k\cdot3^i+a_i$ for some $i\geq1$, then $m$ appears in these first $n$ terms twice (unless this second occurrence would be in position $n+1$).  Otherwise, $m$ appears once.}  In this way, $T_n$ is essentially the statement \quot{Theorem~\ref{thm:struct} holds through \emph{index} $n$.}
\end{itemize}
It should be clear from these definitions that the following are equivalent:
\begin{itemize}
\item Theorem~\ref{thm:struct} is true.
\item $P_m$ holds for all $m\geq1$.
\item $T_n$ holds for all $n\geq1$.
\end{itemize}

We will call a pair of positive integers $\p{k,i}$ such that $m=k\cdot3^i+a_i$ a \emph{witness pair} for $m$, and we will call $i$ a \emph{witness} for $m$.  (Theorem~\ref{thm:struct} says that a value $m$ is repeated if and only if it has a witness.)  We will now show that every $m$ has at most one witness.
\begin{lemma}\label{lem:onei}
For any positive integer $m$, there is at most one $i\geq1$ such that $m\equiv a_i\pb{\modd 3^i}$.
\end{lemma}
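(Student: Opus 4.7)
The plan is to argue by contrapositive: suppose $1 \leq i < j$ both satisfy $m \equiv a_i \pmod{3^i}$ and $m \equiv a_j \pmod{3^j}$. Since $3^i \mid 3^j$, the second congruence implies $m \equiv a_j \pmod{3^i}$, so subtracting we would need $a_i \equiv a_j \pmod{3^i}$. The entire lemma therefore reduces to showing that $a_i \not\equiv a_j \pmod{3^i}$ whenever $i < j$, i.e., that the $3$-adic valuation $v_3(a_j - a_i)$ is strictly less than $i$.

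To do that, I would substitute the closed form $a_i = \frac{5 \cdot 3^{i-1} + 1}{2}$ already provided in the paper, yielding
\[
a_j - a_i = \frac{5\cdot 3^{j-1} - 5\cdot 3^{i-1}}{2} = 5\cdot 3^{i-1}\cdot\frac{3^{j-i}-1}{2}.
\]
The factor $\frac{3^{j-i}-1}{2}$ is a genuine integer, since $3^{j-i}$ is odd, and it is coprime to $3$, since $3^{j-i} - 1 \equiv -1 \pmod{3}$ and dividing by the unit $2$ does not affect the $3$-adic valuation. The factor $5$ is likewise coprime to $3$. Consequently $v_3(a_j - a_i) = i - 1 < i$, so $a_j \not\equiv a_i \pmod{3^i}$, which is the desired contradiction.

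There is no real obstacle here beyond bookkeeping: once the closed form is inserted, the argument is a one-line $3$-adic valuation computation. If one preferred to avoid the closed form altogether, essentially the same computation can be obtained by an induction on $j - i$ using the recurrence $a_i = 3a_{i-1} - 1$, but the direct substitution seems cleanest.
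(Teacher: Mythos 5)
Your proof is correct and follows essentially the same route as the paper: both arguments reduce the claim to showing $3^i \nmid a_j - a_i$ for $i<j$ and then verify this via the closed form $a_i=\frac{5}{2}\cdot 3^{i-1}+\frac{1}{2}$, which exhibits $a_j-a_i$ as $3^{i-1}$ times a unit modulo $3$. The only superficial difference is that you phrase the reduction through congruences and $3$-adic valuation while the paper writes out the explicit equation $k_1\cdot 3^i+a_i=k_2\cdot 3^{i+j}+a_{i+j}$, but the content is identical.
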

\begin{proof}
Suppose for a contradiction that, for some integers $i,j\geq1$, $k_1\cdot 3^i+a_i=k_2\cdot 3^{i+j}+a_{i+j}$.  Then
\[
a_{i+j}-a_i=k_1\cdot 3^i-k_2\cdot 3^{i+j}=3^i\p{k_1+k_2\cdot 3^j}.
\]
In particular, $a_{i+j}-a_i$ must be divisible by $3^i$.

But, using the closed form,
\[
a_{i+j}-a_i=\pb{\frac{5}{2}\cdot3^{i+j-1}+\frac{1}{2}}-\pb{\frac{5}{2}\cdot3^{i-1}+\frac{1}{2}}=\frac{5}{2}\pb{3^{i+j-1}-3^{i-1}}=\frac{5}{2}\cdot3^{i-1}\pb{3^j-1}.
\]
This is clearly not divisible by $3^i$, a contradiction.  Therefore, no such $i$ and $j$ can exist, so there is at most one $i\geq1$ such that $m\equiv a_i\pb{\modd 3^i}$, as required.
\end{proof}
For a value $m$, we will examine the number of values less than $m$ that are repeated.  
Define $R\p{m,i}=\max\pb{0, \fl{\frac{m-a_i-1}{3^i}}}$.  This floored quantity counts the 
witness pairs $\p{k,i}$ for numbers less than $m$.  
If $P_m$ holds, then this is also the number of repeated values $m'<m$ with witness $i$.
If we now let
\[
R\p{m}=\sum_{i=1}^\infty R\p{m,i},
\]
we have that $R\p{m}$ is the total number of repeated values less than $m$ 
(provided that $P_m$ holds.)
This sum converges because only the logarithmically many terms with $a_i-1\leq m$ are nonzero.

We now have the following lemmas.
\begin{lemma}\label{lem:mrn}
Let $m$ be a positive integer.  
Suppose $P_{m-1}$ holds.  
Then, $\Rh\p{m+R\p{m}-1}=m-1$, and $\Rh\p{m+R\p{m}}\geq m$.  (In other words $m+R\p{m}-1$ is the last index in our sequence with value at most $m-1$.)
\end{lemma}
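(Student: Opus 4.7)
The plan is a direct counting argument that turns the hypothesis $P_{m-1}$ into a statement about how many indices $n$ satisfy $\Rh(n)\leq m-1$. The two claims of the lemma are then just the monotonicity of the sequence (provided by $P_{m-1}$) applied to that count.

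First I would verify that $R(m)$ counts precisely the number of repeated values strictly less than $m$, assuming $P_{m-1}$. By definition a value $m'<m$ is repeated if and only if it has a witness pair $(k,i)$ with $k\geq 1$ and $m'=k\cdot 3^i+a_i\leq m-1$. For fixed $i$, the number of admissible $k$ is $\max\pb{0,\fl{(m-1-a_i)/3^i}}=R(m,i)$, and Lemma~\ref{lem:onei} guarantees that no $m'$ is counted twice when we sum over $i$. Hence $R(m)=\sum_i R(m,i)$ equals the number of repeated values in $\{1,\ldots,m-1\}$.

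Next, under $P_{m-1}$ every integer $m'\in\{1,\ldots,m-1\}$ appears at least once in $\Rh$, and appears exactly twice iff it has a witness. Therefore the total number of indices $n$ with $\Rh(n)\leq m-1$ equals
\[
(m-1)+R(m).
\]
Since $P_{m-1}$ also asserts that $\Rh$ is monotone non-decreasing on the portion of the sequence with values $\leq m-1$, those indices must be exactly $1,2,\ldots,m+R(m)-1$. In particular $\Rh(m+R(m)-1)$ is the last value $\leq m-1$, and by monotonicity it equals $m-1$. Then $\Rh(m+R(m))$ is strictly greater than any value already attained below $m$, hence $\Rh(m+R(m))\geq m$, as desired.

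The only subtlety worth flagging is the interaction between the \emph{value}-indexed hypothesis $P_{m-1}$ and the \emph{index}-indexed conclusion: one must make sure that the monotonicity assertion in $P_{m-1}$ really applies to \emph{every} index $n\leq m+R(m)-1$, not just to some prefix. This is exactly why the hypothesis was phrased as ``monotone increasing as long as its terms are at most $m-1$''; together with the fact that values $1,\ldots,m-1$ collectively contribute $(m-1)+R(m)$ indices, this forces the indices $1,\ldots,m+R(m)-1$ to be precisely the indices with value $\leq m-1$. Once this accounting is in place, no further computation is needed.
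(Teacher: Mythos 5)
Your proposal is correct and follows essentially the same counting argument as the paper: each value below $m$ contributes at least one index, repeated values contribute one extra each, $R(m)$ counts those repeats, and monotonicity from the inductive hypothesis places the last value-$(m-1)$ term at index $m+R(m)-1$. Your version is slightly more careful than the paper's (explicitly invoking Lemma~\ref{lem:onei} to justify that $R(m)$ counts repeated values without double-counting, and flagging the value-versus-index subtlety), but the substance is identical.
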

\begin{proof}
The number of terms before the first occurrence of a term greater than or equal to $m$ will be at least $m-1$, since each number smaller than $m$ must appear at least once.  The first occurrence of such a term will be \quot{delayed} by $1$ index for every smaller value that is repeated.  The number of such repeated values is $R\p{m}$.  So, there are $m-1+R\p{m}$ terms before the first occurrence of a term greater than or equal to $m$.  This means that the last occurrence of $m-1$ is in position $m+R\p{m}-1$, as required.
\end{proof}
An immediate consequence of Lemma~\ref{lem:mrn} is that $\Rh\p{m+R\p{m}}$ in fact \emph{equals} $m$, provided that 
$P_m$ holds.
\begin{lemma}\label{lem:rmi}
Let $m$ be a multiple of $3$.  If $i\geq2$ is a witness for $m-1$, then $R\p{m,i}=R\!\pb{\frac{m}{3},i-1}+1$.  Otherwise, $R\p{m,i}=R\!\pb{\frac{m}{3},i-1}$.
\end{lemma}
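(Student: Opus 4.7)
My plan is to reduce both $R\p{m,i}$ and $R\p{\frac{m}{3},i-1}$ to floor expressions sharing the common denominator $3^i$, and then use a mod-$3$ congruence to pin down when the two expressions differ. Since $a_i = 3 a_{i-1} - 1$, we have $3 a_{i-1} = a_i + 1$, so multiplying numerator and denominator of $\p{\frac{m}{3} - a_{i-1} - 1}/3^{i-1}$ by $3$ yields $\p{m - a_i - 4}/3^i$. Thus the claim reduces to a comparison of $\fl{\p{m-a_i-1}/3^i}$ and $\fl{\p{m-a_i-4}/3^i}$, subject to the outer $\max\p{0,\cdot}$.

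Next I would write $m - a_i - 1 = q \cdot 3^i + r$ with $0 \leq r < 3^i$, so that $m - a_i - 4 = q \cdot 3^i + \p{r-3}$. The recurrence $a_i = 3 a_{i-1} - 1$ gives $a_i \equiv 2 \pmod 3$ for every $i \geq 2$, and combined with $m \equiv 0 \pmod 3$ this forces $m - a_i - 1 \equiv 0 \pmod 3$ and hence $r$ to be a multiple of $3$. Since $i \geq 2$ gives $3^i \geq 9 > 3$, the only way for $r < 3$ to hold is $r = 0$. In this boundary case $m - a_i - 4 = \p{q-1} \cdot 3^i + \p{3^i - 3}$, so $\fl{\p{m-a_i-4}/3^i} = q - 1$ whereas $\fl{\p{m-a_i-1}/3^i} = q$. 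If instead $r \geq 3$, then $0 \leq r - 3 < 3^i$, and both floors equal $q$.

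Finally I would verify that the outer $\max\p{0, \cdot}$ does not spoil the conclusion. When $r = 0$ and $q \geq 1$, which is exactly the condition that $i$ is a witness for $m-1$ (with $k=q$), both floors are nonnegative and $R\p{m,i} - R\p{\frac{m}{3},i-1} = 1$. When $r = 0$ but $q \leq 0$, one has $m - 1 \leq a_i$, so $i$ is not a witness for $m-1$; both $R$-values get clipped to $0$. When $r \geq 3$, both floors equal $q$ and the $\max$ acts identically on each side. In every non-witness subcase we therefore get equality, as claimed. I expect no genuine obstacle here; the substantive step is the residue observation $a_i \equiv 2 \pmod 3$ for $i \geq 2$, and everything else is a short case split around the $\max$.
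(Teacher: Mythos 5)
Your proof is correct, and it follows the paper's overall plan---rewrite both counts as floors over the common denominator $3^i$ using $3a_{i-1}=a_i+1$, then decide when the two floors differ---but the decisive step is carried out differently, and in a way worth noting. The paper compares the fractional offsets $(a_i+1)/3^i$ and $(a_{i-1}+1)/3^{i-1}$, asserts they differ by $1/3^i$, and concludes that an integer $\ell$ can separate the two floors only if $\ell$ equals the right endpoint $(m-a_i-1)/3^i$. In fact those offsets differ by $3/3^i$ (the paper's displayed values are each off by a factor of $3$ in their second term: $(a_i+1)/3^i=\frac{5}{6}+\frac{1}{2\cdot 3^{i-1}}$, not $\frac{5}{6}+\frac{1}{2\cdot 3^{i}}$), so the separating interval actually contains three candidate multiples of $1/3^i$, and an extra congruence is needed to single out the endpoint. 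Your division-with-remainder argument supplies exactly that: $m\equiv 0\pmod 3$ and $a_i\equiv 2\pmod 3$ force the remainder $r$ to be a multiple of $3$, so the floors can differ only when $r=0$, which is precisely the witness condition. Thus your route is not merely an equivalent alternative---it closes a small arithmetic gap in the paper's own justification (the lemma itself is of course still true). Your treatment of the $\max(0,\cdot)$ clipping is also more explicit than the paper's one-line dismissal of the case $a_i+1\geq m$, and it correctly ties the subcase $r=0$, $q\leq 0$ to the failure of the witness condition. No gaps.
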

\begin{proof}
The lemma is clearly true if $a_i+1\geq m$, so we can assume without loss of generality that $a_i+1<m$ and thereby ignore the $\max$ in the definition of $R\p{m,i}$ when proving this lemma.

We have
\[
R\p{m,i}=\fl{\frac{m-a_i-1}{3^i}}=\fl{\frac{m}{3^i}-\frac{a_i+1}{3^i}}
\]
and
\[
R\!\pb{\frac{m}{3},i-1}=\fl{\frac{\frac{m}{3}-a_{i-1}-1}{3^{i-1}}}=\fl{\frac{m}{3^i}-\frac{a_{i-1}+1}{3^{i-1}}}.
\]
Since $a_i=\frac{5}{2}\cdot3^{i-1}+\frac{1}{2}$,
\[
\frac{a_i+1}{3^i}=\frac{5}{6}+\frac{1}{2\cdot3^i}
\]
and
\[
\frac{a_{i-1}+1}{3^{i-1}}=\frac{5}{6}+\frac{1}{2\cdot3^{i-1}}.
\]
The first of these definitely smaller, so $R\p{m,i}\geq R\!\pb{\frac{m}{3},i-1}$.  Furthermore, the above fractions differ by $\frac{1}{3^i}$, so $R\p{m,i}\leq R\!\pb{\frac{m}{3},i-1}+1$.

The only way they will not be equal is if there is some integer $\ell$ such that
\[
\frac{m}{3^i}-\frac{a_{i-1}+1}{3^{i-1}}<\ell\leq\frac{m}{3^i}-\frac{a_i+1}{3^i}.
\]
Since the bounds differ by $\frac{1}{3^i}$ and they have common denominator $3^i$, this can only happen if $\ell=\frac{m}{3^i}-\frac{a_i+1}{3^i}$.  This gives that $m-a_i+1=\ell\cdot3^i$, or $m-1=\ell\cdot3^i+a_i$ for some integer $\ell$.  Since $a_i+1<m$, we must have $\ell\geq1$.  So, for $R\p{m,i}=R\!\pb{\frac{m}{3},i-1}+1$, we obtain that $i$ must be a witness for $m-1$, as required.
\end{proof}
\begin{lemma}\label{lem:rmrep}
Let $m$ be a multiple of $3$.  Then,
\[
\frac{m}{3}+R\!\pb{\frac{m}{3}}=\begin{cases}
R\p{m}+1 & \text{if }m-1\text{ has a witness}\\
R\p{m}+2 & \text{if }m-1\text{ does not have a witness}.
\end{cases}
\]
\end{lemma}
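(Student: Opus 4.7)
The plan is to apply Lemma~\ref{lem:rmi} for each $i \geq 2$, sum over $i$, and handle the $i = 1$ contribution by a direct computation. The key arithmetic observation enabling this is that when $m$ is a multiple of $3$, we have $m - 1 \equiv 2 \pmod{3}$ while $a_1 = 3 \equiv 0 \pmod{3}$, so $i = 1$ is never a witness for $m - 1$. Hence ``$m - 1$ has a witness'' coincides with ``$m - 1$ has a witness with index at least $2$,'' which is the form that naturally emerges from summing Lemma~\ref{lem:rmi}.

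Explicitly, for each $i \geq 2$, Lemma~\ref{lem:rmi} gives $R(m, i) = R\!\pb{\frac{m}{3}, i - 1} + \delta_i$, where $\delta_i \in \st{0, 1}$ equals $1$ iff $i$ is a witness for $m - 1$. Summing and reindexing $j = i - 1$ yields
\[
\sum_{i \geq 2} R(m, i) = R\!\pb{\frac{m}{3}} + \sum_{i \geq 2} \delta_i.
\]
By Lemma~\ref{lem:onei}, at most one $\delta_i$ is nonzero, so the indicator sum equals $1$ when $m - 1$ has a witness and $0$ otherwise. The $i = 1$ term can be computed directly: for $m \geq 6$ a multiple of $3$, $R(m, 1) = \fl{\frac{m - 4}{3}} = \frac{m}{3} - 2$. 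Adding this to both sides gives $R(m) = \pb{\frac{m}{3} - 2} + R\!\pb{\frac{m}{3}} + [\text{$m - 1$ has a witness}]$, which rearranges to the claimed dichotomy.

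The only real obstacle is careful bookkeeping. One must verify that the termwise application of Lemma~\ref{lem:rmi} is not disturbed by the $\max$ in the definition of $R(m, i)$ for small $i$ (Lemma~\ref{lem:rmi} itself handles this at the statement level), and confirm that the closed form $R(m, 1) = \frac{m}{3} - 2$ holds throughout the range of $m$ where the lemma will be invoked in the induction. The boundary case $m = 3$ is a minor outlier where the $\max$ truncation bites, and if relevant it can be checked by hand.
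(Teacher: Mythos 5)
Your proof is correct and follows essentially the same route as the paper: sum Lemma~\ref{lem:rmi} over $i\geq2$, use Lemma~\ref{lem:onei} to see that at most one index contributes the extra $+1$, compute $R\p{m,1}=\frac{m}{3}-2$ directly, and rearrange. Your explicit remarks that $i=1$ can never be a witness for $m-1$ and that $m=3$ is a boundary outlier are slightly more careful than the paper's terse version, but the argument is the same.
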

\begin{proof}
As a consequence of Lemma~\ref{lem:rmi} and Lemma~\ref{lem:onei},
\[
R\p{m}=\begin{cases}
R\p{m,1}+R\!\pb{\frac{m}{3}} & \text{if }m-1\text{ does not have a witness}\\
R\p{m,1}+R\!\pb{\frac{m}{3}}+1 & \text{if }m-1\text{ has a witness}.
\end{cases}
\]
We also have
\[
R\p{m,1}=\fl{\frac{m-a_1-1}{3}}=\fl{\frac{m-4}{3}}=\frac{m}{3}-2.
\]
Substituting this into the above and rearranging terms gives the required form.
\end{proof}
\begin{lemma}\label{lem:mm3w}
Let $m$ be a multiple of $3$.  Then $m-1$ has a witness if and only if $\frac{m}{3}$ has a witness.
\end{lemma}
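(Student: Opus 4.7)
The plan is to reduce everything to the defining recurrence $a_i = 3 a_{i-1} - 1$, rewritten in the more useful form $a_i + 1 = 3 a_{i-1}$, which exactly links a witness pair for $m-1$ to one for $m/3$ by dividing by $3$.

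First I would handle the forward direction. Suppose $i$ is a witness for $m-1$, so there exists $k \geq 1$ with $m - 1 = k\cdot 3^i + a_i$. I would start by ruling out $i = 1$: if $i=1$, then $m - 1 = 3k + a_1 = 3k + 3$, forcing $m \equiv 1 \pmod{3}$, contradicting the hypothesis that $m$ is a multiple of $3$. Hence $i \geq 2$. Then using $a_i + 1 = 3a_{i-1}$ I would rewrite
\[
m = k\cdot 3^i + a_i + 1 = k\cdot 3^i + 3 a_{i-1} = 3\pb{k\cdot 3^{i-1} + a_{i-1}},
\]
so $\frac{m}{3} = k\cdot 3^{i-1} + a_{i-1}$ exhibits $i-1 \geq 1$ as a witness for $\frac{m}{3}$ (with the same $k \geq 1$).

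The reverse direction is the same computation read backward. Suppose $j$ is a witness for $\frac{m}{3}$, so $\frac{m}{3} = k\cdot 3^j + a_j$ for some $k \geq 1$. Multiplying through by $3$ and applying $3 a_j = a_{j+1} + 1$ gives
\[
m = k\cdot 3^{j+1} + 3 a_j = k\cdot 3^{j+1} + a_{j+1} + 1,
\]
so $m - 1 = k\cdot 3^{j+1} + a_{j+1}$, which shows $j+1$ is a witness for $m-1$.

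There is no real obstacle; the only subtle point is confirming that the witness-pair bookkeeping stays valid (the same $k \geq 1$ works on both sides and the witness index shifts up or down by exactly one). Once the identity $a_i + 1 = 3 a_{i-1}$ is in hand, both implications are essentially a single line of algebra, and the small $i = 1$ case is automatically excluded by the divisibility hypothesis on $m$.
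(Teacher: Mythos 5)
Your proof is correct and follows essentially the same route as the paper: both directions reduce to the identity $a_i+1=3a_{i-1}$, shifting the witness index down by one (forward) or up by one (backward) while keeping the same $k$. Your explicit exclusion of the case $i=1$ via the divisibility of $m$ by $3$ is a small point the paper's proof glosses over, and it is handled correctly.
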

\begin{proof}\lb
\iffpf
{
Suppose $m-1=k\cdot3^i+a_i$ for some positive integers $k$ and $i$.  Then, $m=k\cdot3^i+a_i+1$.  But, $a_i=3a_{i-1}-1$, so $m=k\cdot3^i+3a_{i-1}$.  This means that $\frac{m}{3}=k\cdot3^{i-1}+a_{i-1}$, so $i-1$ is a witness for $\frac{m}{3}$.
}
{
Suppose $\frac{m}{3}=k\cdot3^i+a_i$ for some positive integers $k$ and $i$.  Then, $m=3k\cdot3^i+3a_i$.  But, $a_{i+1}=3a_i-1$, so $m=k\cdot3^{i+1}+a_{i+1}+1$.  This means that $m-1=k\cdot3^{i+1}+a_{i+1}$, so $i+1$ is a witness for $m-1$.
}
\end{proof}
\begin{lemma}\label{lem:brs}
Let $m\geq6$ be a multiple of $3$.  Suppose 
$P_{m-1}$ holds.  
Then, if $m-1$ repeats we have $\Rh\p{R\p{m}+1}=\frac{m}{3}$.  If $m-1$ does not repeat we have $\Rh\p{R\p{m}+1}=\frac{m}{3}-1$.  In both cases we have
\[
\begin{cases}
\Rh\p{R\p{m}+2}=\frac{m}{3}\\
\Rh\p{R\p{m}+3}=\frac{m}{3}+1.
\end{cases}
\]
\end{lemma}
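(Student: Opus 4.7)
The plan is to match each of the three indices $R(m)+1, R(m)+2, R(m)+3$ to a position where $\Rh$ is already determined by Lemma~\ref{lem:mrn}, together with its stated consequence $\Rh(m'+R(m')) = m'$ whenever $P_{m'}$ holds, applied to the small values $m' = m/3$ and $m' = m/3 + 1$. Both instances are accessible because $P_{m-1}$ entails $P_{m/3 + 1}$ whenever $m \geq 3$, which the hypothesis $m \geq 6$ comfortably implies.

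Set $M = m/3$ and $r = R(M)$. Lemma~\ref{lem:rmrep} pins down $M + r$ as either $R(m)+1$ or $R(m)+2$, depending on whether $m-1$ has a witness; and Lemma~\ref{lem:mm3w} translates that witness condition for $m-1$ into the same condition on $M$, which via $P_M$ controls whether $M$ is singleton or doubled in the sequence. In the witnessed case, the two occurrences of $M$ sit at consecutive indices $M+r$ and $M+r+1$ by the monotonicity clause of $P_M$, yielding $\Rh(R(m)+1) = \Rh(R(m)+2) = M$. In the unwitnessed case, $M$ appears only once, at $M+r = R(m)+2$, and the immediately preceding index $M+r-1 = R(m)+1$ is forced to hold $M-1 = m/3-1$ by the first clause of Lemma~\ref{lem:mrn} applied at $M$. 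Finally, to compute $\Rh(R(m)+3)$ in either case, apply the Lemma~\ref{lem:mrn} remark at $M+1$: the first index of value $M+1$ is $(M+1) + R(M+1)$, and this should equal $R(m)+3$.

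The main obstacle, and really the only step beyond pure bookkeeping, is the shift $R(M+1) - r \in \{0,1\}$, which needs to equal $1$ precisely when $M$ has a witness. This follows from the series definition of $R$ and Lemma~\ref{lem:onei}: passing the cutoff from $M$ to $M+1$ adds one repeated value to the count if and only if $M$ itself is repeated. Once that is in hand, combining it with $M + r = R(m)+1$ in the witnessed case or $M + r = R(m)+2$ in the unwitnessed case yields $(M+1) + R(M+1) = R(m)+3$ on the nose, completing the verification.
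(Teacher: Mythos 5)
Your proposal is correct and follows essentially the same route as the paper: the same case split on whether $m-1$ has a witness, with Lemma~\ref{lem:rmrep} locating $\frac{m}{3}+R\!\pb{\frac{m}{3}}$ among $R\p{m}+1,R\p{m}+2$, Lemma~\ref{lem:mm3w} transferring the witness condition to $\frac{m}{3}$, and Lemma~\ref{lem:mrn} (plus its remark) pinning down the values. The only cosmetic difference is your treatment of $\Rh\p{R\p{m}+3}$ via the identity $R\!\pb{\frac{m}{3}+1}-R\!\pb{\frac{m}{3}}\in\st{0,1}$ (equal to $1$ exactly when $\frac{m}{3}$ has a witness), where the paper instead appeals directly to monotonicity and the fact that values appear at most twice; both are valid.
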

\begin{proof}
We will look at the two cases separately.
\begin{description}
\item[$m-1$ repeats:] Then, $m-1$ has a witness.  So, by Lemma~\ref{lem:rmrep}, $R\p{m}+1=\frac{m}{3}+R\!\pb{\frac{m}{3}}$.  By Lemma~\ref{lem:mrn}, $\Rh\p{R\p{m}+1}=\frac{m}{3}$.  Furthermore, by Lemma~\ref{lem:mm3w}, $\frac{m}{3}$ has a witness (and hence repeats), so $\Rh\p{R\p{m}+2}=\frac{m}{3}$ as well.  Since values appear at most twice, we then have $\Rh\p{R\p{m}+3}=\frac{m}{3}+1$, as required.
\item[$m-1$ does not repeat:] Then, $m-1$ does not have a witness.  So, by Lemma~\ref{lem:rmrep}, $R\p{m}+2=\frac{m}{3}+R\!\pb{\frac{m}{3}}$.  By Lemma~\ref{lem:mrn}, $\Rh\p{R\p{m}+2}=\frac{m}{3}$ and $\Rh\p{R\p{m}+1}=\frac{m}{3}-1$.  Furthermore, by Lemma~\ref{lem:mm3w}, $\frac{m}{3}$ has no witness (and hence does not repeat), so $\Rh\p{R\p{m}+3}=\frac{m}{3}+1$.
\end{description}
\end{proof}

We are now ready to prove Theorem~\ref{thm:struct}.
\begin{proof}
The proof will be by induction on $n$, the index in the sequence.  For the base case, observe that each term in the initial condition appears once, and no such term has a witness.

Now, suppose that
$T_{n-1}$ holds, 
and suppose that we wish to show that $\Rh\p{n}=m$ for some $m\geq6$.  Also, suppose that 
$P_{m-1}$ holds.  
There are seven cases to consider, which cover all possibilities.  (Note that no repeated term is congruent to $1$ mod $3$, since $a_1$ is divisible by $3$ and $a_i\equiv2\pb{\modd 3}$ for all $i\geq2$.)
\begin{description}
\item[$m\equiv0\pb{\modd 3}$, first occurrence, $m-1$ not repeated:] In this case, $m-1$ has no witness and, by Lemma~\ref{lem:mrn}, $n=m+R\p{m}$.  We have (using Lemma~\ref{lem:brs})
\begin{align*}
\Rh\p{n}&=\Rh\p{n-\Rh\p{n-1}}+\Rh\p{n-\Rh\p{n-2}}+\Rh\p{n-\Rh\p{n-3}}\\
&=\Rh\p{m+R\p{m}-\pb{m-1}}+\Rh\p{m+R\p{m}-\pb{m-2}}\\
&\hspace{0.2in}+\Rh\p{m+R\p{m}-\pb{m-3}}\\
&=\Rh\p{R\p{m}+1}+\Rh\p{R\p{m}+2}+\Rh\p{R\p{m}+3}\\
&=\pb{\frac{m}{3}-1}+\frac{m}{3}+\pb{\frac{m}{3}+1}\\
&=m,
\end{align*}
as required.
\item[$m\equiv0\pb{\modd 3}$, first occurrence, $m-1$ repeated:] In this case, $m-1$ has a witness and, by Lemma~\ref{lem:mrn}, $n=m+R\p{m}$.  We have (using Lemma~\ref{lem:brs})
\begin{align*}
\Rh\p{n}&=\Rh\p{n-\Rh\p{n-1}}+\Rh\p{n-\Rh\p{n-2}}+\Rh\p{n-\Rh\p{n-3}}\\
&=\Rh\p{m+R\p{m}-\pb{m-1}}+\Rh\p{m+R\p{m}-\pb{m-1}}\\
&\hspace{0.2in}+\Rh\p{m+R\p{m}-\pb{m-2}}\\
&=\Rh\p{R\p{m}+1}+\Rh\p{R\p{m}+1}+\Rh\p{R\p{m}+2}\\
&=\frac{m}{3}+\frac{m}{3}+\frac{m}{3}\\
&=m,
\end{align*}
as required.
\item[$m\equiv0\pb{\modd 3}$, second occurrence, $m-1$ not repeated:] In this case, $m-1$ has no witness and, by Lemma~\ref{lem:mrn}, $n=m+R\p{m}+1$.  We have (using Lemma~\ref{lem:brs})
\begin{align*}
\Rh\p{n}&=\Rh\p{n-\Rh\p{n-1}}+\Rh\p{n-\Rh\p{n-2}}+\Rh\p{n-\Rh\p{n-3}}\\
&=\Rh\p{m+R\p{m}+1-m}+\Rh\p{m+R\p{m}+1-\pb{m-1}}\\
&\hspace{0.2in}+\Rh\p{m+R\p{m}+1-\pb{m-2}}\\
&=\Rh\p{R\p{m}+1}+\Rh\p{R\p{m}+2}+\Rh\p{R\p{m}+3}\\
&=\pb{\frac{m}{3}-1}+\frac{m}{3}+\pb{\frac{m}{3}+1}\\
&=m,
\end{align*}
as required.
\item[$m\equiv0\pb{\modd 3}$, second occurrence, $m-1$ repeated:] In this case, $m-1$ has a witness and, by Lemma~\ref{lem:mrn}, $n=m+R\p{m}+1$.  We have (using Lemma~\ref{lem:brs})
\begin{align*}
\Rh\p{n}&=\Rh\p{n-\Rh\p{n-1}}+\Rh\p{n-\Rh\p{n-2}}+\Rh\p{n-\Rh\p{n-3}}\\
&=\Rh\p{m+R\p{m}+1-m}+\Rh\p{m+R\p{m}+1-\pb{m-1}}\\
&\hspace{0.2in}+\Rh\p{m+R\p{m}+1-\pb{m-1}}\\
&=\Rh\p{R\p{m}+1}+\Rh\p{R\p{m}+1}+\Rh\p{R\p{m}+2}\\
&=\frac{m}{3}+\frac{m}{3}+\frac{m}{3}\\
&=m,
\end{align*}
as required.
\item[$m\equiv1\pb{\modd 3}$:] In this case, $m-1$ is divisible by $3$ and therefore definitely repeats (since $a_1=3$).  This also means that $R\p{m-1}=R\p{m}-1$.  By Lemma~\ref{lem:mrn}, $n=m+R\p{m}$.  We have (using Lemma~\ref{lem:brs})
\begin{align*}
\Rh\p{n}&=\Rh\p{n-\Rh\p{n-1}}+\Rh\p{n-\Rh\p{n-2}}+\Rh\p{n-\Rh\p{n-3}}\\
&=\Rh\p{m+R\p{m}-\p{m-1}}+\Rh\p{m+R\p{m}-\pb{m-1}}\\
&\hspace{0.2in}+\Rh\p{m+R\p{m}-\pb{m-2}}\\
&=\Rh\p{R\p{m}+1}+\Rh\p{R\p{m}+1}+\Rh\p{R\p{m}+2}\\
&=\Rh\p{R\p{m-1}+2}+\Rh\p{R\p{m-1}+2}+\Rh\p{R\p{m-1}+3}\\
&=\frac{m-1}{3}+\frac{m-1}{3}+\pb{\frac{m-1}{3}+1}\\
&=m,
\end{align*}
as required.
\item[$m\equiv2\pb{\modd 3}$, first occurrence:] In this case, $m-2$ is divisible by $3$ and therefore definitely repeats.  This also means that $R\p{m-2}=R\p{m}-1$.  By Lemma~\ref{lem:mrn}, $n=m+R\p{m}$.  We have (using Lemma~\ref{lem:brs})
\begin{align*}
\Rh\p{n}&=\Rh\p{n-\Rh\p{n-1}}+\Rh\p{n-\Rh\p{n-2}}+\Rh\p{n-\Rh\p{n-3}}\\
&=\Rh\p{m+R\p{m}-\p{m-1}}+\Rh\p{m+R\p{m}-\pb{m-2}}\\
&\hspace{0.2in}+\Rh\p{m+R\p{m}-\pb{m-2}}\\
&=\Rh\p{R\p{m}+1}+\Rh\p{R\p{m}+2}+\Rh\p{R\p{m}+2}\\
&=\Rh\p{R\p{m-2}+2}+\Rh\p{R\p{m-2}+3}+\Rh\p{R\p{m-2}+3}\\
&=\frac{m-2}{3}+\pb{\frac{m-2}{3}+1}+\pb{\frac{m-2}{3}+1}\\
&=m,
\end{align*}
as required.
\item[$m\equiv2\pb{\modd 3}$, second occurrence:] In this case, $m$ has a witness, so $R\p{m+1}=R\p{m}+1$.  Also, $R\p{m-2}=R\p{m}-1$.  By Lemma~\ref{lem:mrn}, $n=m+R\p{m}+1$.  We have (using Lemma~\ref{lem:brs})
\begin{align*}
\Rh\p{n}&=\Rh\p{n-\Rh\p{n-1}}+\Rh\p{n-\Rh\p{n-2}}+\Rh\p{n-\Rh\p{n-3}}\\
&=\Rh\p{m+R\p{m}+1-m}+\Rh\p{m+R\p{m}+1-\pb{m-1}}\\
&\hspace{0.2in}+\Rh\p{m+R\p{m}+1-\pb{m-2}}\\
&=\Rh\p{R\p{m}+1}+\Rh\p{R\p{m}+2}+\Rh\p{R\p{m}+3}\\
&=\Rh\p{R\p{m-2}+2}+\Rh\p{R\p{m-2}+3}+\Rh\p{R\p{m+1}+2}\\
&=\frac{m-2}{3}+\pb{\frac{m-2}{3}+1}+\pb{\frac{m+1}{3}}\\
&=m,
\end{align*}
as required.
\end{description}
\end{proof}

We have the following corollary.
\begin{cor}
We have
\[
\limi{n}{\frac{\Rh\p{n}}{n}}=\frac{2}{3}.
\]
\end{cor}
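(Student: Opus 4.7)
The plan is to combine the structural description from Theorem~\ref{thm:struct} with an asymptotic estimate for $R(m)$.  By Lemma~\ref{lem:mrn} (now unconditional, since $P_m$ holds for every $m$), if $\Rh(n)=m$ then $n\in\{m+R(m),\,m+R(m)+1\}$, with the second possibility occurring only when $m$ has a witness.  In particular, $n=m+R(m)+O(1)$.  Since every positive integer appears in the $\Rh$-sequence, $\Rh(n)\to\infty$ as $n\to\infty$, so it suffices to show that $R(m)/m\to 1/2$; then $n/m\to 3/2$, and hence $\Rh(n)/n=m/n\to 2/3$.

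To estimate $R(m)$, note that $R(m,i)=0$ whenever $a_i+1>m$.  Using the closed form $a_i=\frac{5}{2}\cdot 3^{i-1}+\frac{1}{2}$, the number of nonzero terms in $R(m)=\sum_i R(m,i)$ is $I(m)=O(\log m)$.  Dropping the floors introduces a per-term error of at most $1$, contributing total error $O(\log m)$.  Writing out the unfloored sum,
\begin{align*}
\sum_{i=1}^{I(m)}\frac{m-a_i-1}{3^i} &= m\sum_{i=1}^{I(m)}3^{-i}\;-\;\sum_{i=1}^{I(m)}\frac{a_i+1}{3^i}.
\end{align*}
Since $\sum_{i=1}^{\infty}3^{-i}=\frac{1}{2}$ and $3^{I(m)}=\Theta(m)$, the first term on the right equals $m/2+O(1)$.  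From $\frac{a_i+1}{3^i}=\frac{5}{6}+\frac{1}{2\cdot 3^i}$, the second sum equals $\frac{5\,I(m)}{6}+O(1)=O(\log m)$.  Therefore $R(m)=\frac{m}{2}+O(\log m)$, and hence $R(m)/m\to 1/2$.

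The only delicate aspect is the bookkeeping of error terms from the floors, the truncation of the geometric series, and the $\pm 1$ ambiguity in $n-m-R(m)$; but since all of these contribute $O(\log m)=o(m)$, they do not affect the limit, and we conclude $\Rh(n)/n\to 2/3$.
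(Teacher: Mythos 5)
Your proposal is correct and follows the same overall strategy as the paper: both reduce the problem, via Lemma~\ref{lem:mrn}, to showing that $n=m+R\p{m}+O\p{1}$ when $\Rh\p{n}=m$, and hence to proving $R\p{m}/m\to\frac{1}{2}$. The only real difference is in how that last limit is evaluated. The paper computes $\limi{m}{R\p{m,i}/m}=3^{-i}$ termwise and then interchanges the limit with the infinite sum $\sum_{i\geq1}R\p{m,i}/m$ (an interchange it does not explicitly justify, though it is easily justified since $R\p{m,i}/m\leq 3^{-i}$). You instead truncate the sum at the $O\p{\log m}$ nonzero terms, drop the floors at a cost of $O\p{\log m}$, and evaluate the resulting geometric sum exactly, obtaining the quantitative refinement $R\p{m}=\frac{m}{2}+O\p{\log m}$. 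Your route avoids the limit--sum interchange entirely and yields an error rate as a bonus; the paper's route is slightly shorter on the page. Both are sound.
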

\begin{proof}
If $\Rh\p{n}=m$, then $n=m+R\p{m}$ or $n=m+R\p{m}+1$.  So, it will suffice to show that
\[
\limi{m}{\frac{m}{m+R\p{m}}}=\frac{2}{3},
\]
for which it is sufficient to show that
\[
\limi{m}{\frac{R\p{m}}{m}}=\frac{1}{2}.
\]
For each $i\geq1$, we have
\[
\limi{m}{\frac{R\p{m,i}}{m}}=\frac{1}{3^i}.
\]
So,
\begin{align*}
\limi{m}{\frac{R\p{m}}{m}}&=\limi{m}{\frac{1}{m}\sum_{i=1}^\infty R\p{m,i}}\\
&=\sum_{i=1}^\infty\limi{m}{\frac{R\p{m,i}}{m}}\\
&=\sum_{i=1}^\infty\frac{1}{3^i}\\
&=\frac{1}{2},
\end{align*}
as required.
\end{proof}

\subsection{Algorithm for Computing the Sequence}

Theorem~\ref{thm:struct} leads to an efficient algorithm for calculating $\Rh\p{n}$.  Observe that, for each $m$ and $i$, $R\p{m,i}$ can be computed efficiently.  Since only logarithmically many terms in the sum for $R\p{m}$ are nonzero, this means that $R\p{m}$ can be computed efficiently.

To compute $\Rh\p{n}$, we seek an $m$ such that $n=m+R\p{m}$.  It may be the case that no such $m$ exists, in which case we need to be able to say that no such $m$ exists, and we need to find $m$ such that $n=m+R\p{m}+1$.  This task can be done efficiently using a binary search.  We know that $\Rh\p{n}\leq n$, so for an initial upper bound on $m$ we can use $n$ (and we can use $1$ as a lower bound).  So, in at most $O\p{\log\p{n}}$ steps, we can either find an $m$ so that $n=m+R\p{m}$ or show that none exists.  In the latter case, the final lower bound we find for $m$ will be such that $n=m+R\p{m}+1$.  The total running time of this algorithm is $O\p{\log^2\p{n}}$.

\section{Beyond Our Sequence}\label{sec:fut}
According to the work of Isgur et al.~\cite{isgur1}, our $\Rh$-sequence is the fundamental member of an infinite family of slow sequences with similar recurrences.  (The next one satisfies the recurrence
$
\Rh'\p{n}=\Rh'\p{n-\Rh'\p{n-2}}+\Rh'\p{n-\Rh'\p{n-4}}+\Rh'\p{n-\Rh'\p{n-6}}
$.) 
As mentioned in the introduction, this family and the family resulting from the $V$-sequence comprise the only known examples of slow Hofstadter-like sequences with all recurrence terms of the form $D\p{n-D\p{n-i}}$ for some $i$.  The author has conducted a search for other such sequences without finding another (nontrivial) example.  An obvious idea would be to generalize the $\Rh$-recurrence to the $k$-term recurrence
\[
\Rh_k\p{n}=\sum_{i=1}^k\Rh_k\p{n-\Rh_k\p{n-i}}
\]
(where $\Rh_3$ is the $\Rh$-recurrence and $\Rh_2$ is the $Q$-recurrence).  If $k=1$, the initial condition $\Rh_1\p{1}=1$ generates the all-ones sequence, which, while technically slow, is not particularly interesting.  Unfortunately, we have the following result:
\begin{theorem}\label{thm:nope}
The $\Rh$-sequence is the only nontrivial slow sequence resulting from a recurrence $\Rh_k$ with an initial condition of the form $\Rh_k\p{i}=i$ for all $i\leq N$ for some $N$.
\end{theorem}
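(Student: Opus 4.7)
The plan is to restrict the candidate initial conditions to essentially a single sequence per $k$, dispatch the small and known cases directly, and then show that the remaining sequence fails slowness for every $k\geq 4$.

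For the restriction, assume $N\geq k$ so that the recurrence at $n=N+1$ is computable from the initial condition. Since $\Rh_k\p{N+1-i}=N+1-i$ for $1\leq i\leq k$, one obtains $\Rh_k\p{N+1}=\sum_{i=1}^k\Rh_k\p{i}=\frac{k\p{k+1}}{2}$. Slowness forces $\frac{k\p{k+1}}{2}-N\in\st{0,1}$, so $N\in\st{\frac{k\p{k+1}}{2}-1,\frac{k\p{k+1}}{2}}$; both choices yield the same sequence, call it $S_k$. Values of $N$ smaller than $k$ either refer to nonexistent terms in the recurrence or, by the same calculation, force $\Rh_k\p{N+1}-N>1$, and so can be dismissed. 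Among the surviving cases, $k=1$ produces the all-ones sequence (the trivial exception) and $k=3$ produces the $\Rh$-sequence (slow by Theorem~\ref{thm:slow}); for $k=2$ a direct computation gives $S_2=1,2,3,3,4,5,5,6,6,6,8,\ldots$, with $S_2\p{11}-S_2\p{10}=2$ killing slowness.

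The substantive task is ruling out every $k\geq 4$. Writing $M=\frac{k\p{k+1}}{2}$, I would prove by induction on $j$ that, as long as every recursive call $n-S_k\p{n-i}$ lands in $\st{1,\ldots,M}$, the sequence is in a regular regime characterized by $S_k\p{M+1+\p{k+1}j}=M+kj$ (a repeat) and by increments of $1$ at every other index. Because the call targets grow linearly in $n$, this regime must end: eventually a call lands in the extension region $\st{M+1,M+2,\ldots}$, inside which $S_k$ lags its index by $1$ on account of $S_k\p{M+1}=M$. The lag then propagates through the recurrence sum, and the crux of the proof for $k\geq 4$ is to show that at some first index $n^\ast$ two of the $k$ summands shift their call targets simultaneously---one within the initial region and one just across the lag---producing a jump $S_k\p{n^\ast}-S_k\p{n^\ast-1}\geq 2$ and thereby breaking slowness. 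A direct calculation at $k=4$ bears this out: $n^\ast=49$ with $S_4\p{49}-S_4\p{48}=41-39=2$. The main obstacle is to establish this double-shift phenomenon uniformly in $k$ rather than checking each $k$ by hand; I would expect to do so by computing $n^\ast$ in closed form as a function of $k$ and then verifying the size-$2$ jump algebraically, reducing the proof to a small combinatorial check that the relevant pair of summands always cross simultaneously and that no compensating decrease masks the jump.
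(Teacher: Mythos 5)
Your reduction is exactly the paper's: require $N\geq k$ so that $\Rh_k\p{N+1}$ exists, compute $\Rh_k\p{N+1}=\frac{k^2+k}{2}$, conclude $N\in\st{\frac{k^2+k}{2}-1,\frac{k^2+k}{2}}$ with both choices giving the same sequence, and dispose of $k=1,2,3$ by inspection. That part is correct and complete. The problem is that for $k\geq4$ you have written a research plan, not a proof. The sentence ``I would expect to do so by computing $n^\ast$ in closed form as a function of $k$ and then verifying the size-$2$ jump algebraically'' is precisely the content of the paper's Proposition~\ref{prop:jump}, which is where essentially all of the work in this theorem lives: a three-stage induction establishing $\Rh_k\p{N+q\p{k+1}+r}=N+qk+r-1$ throughout the initial regime, the identification of the first index (at $A=\p{N-k+1}\p{k+1}$) where a recursive call lands on $\Rh_k\p{N+1}=N$ rather than $N+1$, the propagation of the resulting lag through indices $N+A,\ldots,N+A+k-1$ (including the special drop at $r=k-1$, which needs $k\geq4$ so that $N-k+4\leq N$), and the final evaluation $\Rh_k\p{N+A+k}=\Rh_k\p{N+A+k-1}+2$. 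A numerical check at $k=4$ does not substitute for this; the theorem quantifies over all $k$.

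Two smaller remarks. First, your located jump for $k=4$ ($n^\ast=49$, values $39\to41$) agrees exactly with the paper's closed form $n^\ast=\frac{1}{2}k^3+\frac{1}{2}k^2+2k+1$, so your plan is aimed at the right place. Second, your proposed mechanism (``two of the $k$ summands shift their call targets simultaneously'' at $n^\ast$) is not quite what the completed computation shows: the verified behavior is a dip --- an extra repeat at indices $N+A+k-2$ and $N+A+k-1$ caused by one summand evaluating to $\Rh_k\p{N+1}=N$ instead of $N+1$ --- followed by a recovery of size $2$ at $N+A+k$. This is a warning that the ``small combinatorial check'' you defer is more delicate than the sketch suggests, and it is the step you must actually carry out.
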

The bulk of the Theorem~\ref{thm:nope} follows from the following proposition:
\begin{prop}\label{prop:jump}
Suppose $k\geq4$.  The sequence generated by the recurrence $\Rh_k$ with the initial condition $\Rh_k\p{i}=i$ for all $i<\frac{k^2+k}{2}$ satisfies
\[
\Rh_k\!\pb{\frac{1}{2}k^3+\frac{1}{2}k^2+2k+1}=\Rh_k\!\pb{\frac{1}{2}k^3+\frac{1}{2}k^2+2k}+2.
\]
In particular, the sequence has a jump of difference $2$, so it is not slow.
\end{prop}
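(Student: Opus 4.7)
The plan is to compute $\Rh_k\p{n}$ explicitly from the initial condition up through $n = \frac{1}{2}k^3 + \frac{1}{2}k^2 + 2k + 1$. Writing $N = \frac{k^2+k}{2}$, so that this target position is $Nk + 2k + 1$, the range $N \leq n \leq Nk + 2k + 1$ divides naturally into a \emph{regular phase} with a self-similar block structure and a short \emph{irregular phase} containing the claimed jump.

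In the regular phase, I would prove by induction on $m$ that for every $0 \leq m \leq N - k$, the $k+1$ consecutive positions $N + m\p{k+1}, N + m\p{k+1} + 1, \ldots, N + m\p{k+1} + k$ carry the values $N + mk, N + mk, N + mk + 1, \ldots, N + \p{m+1}k - 1$, respectively. The inductive step hinges on a single recurrence evaluation at the first position of block $m$, namely $n = N + m\p{k+1}$: the outer arguments simplify to $m + 1, m + 2, \ldots, m + k$, and the hypothesis $m \leq N - k$ guarantees each outer argument is at most $N$ and so equals its identity value (with $\Rh_k\p{N} = N$ handling the boundary case). The sum is then $\sum_{i=1}^k \p{m + i} = mk + N$, matching the claim; the remaining $k$ positions of each block follow from parallel one-line evaluations. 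This phase ends at position $Nk + k$, where $\Rh_k\p{Nk + k} = v_1 := Nk - \frac{1}{2}\p{k-1}\p{k-2}$.

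In the irregular phase, I would compute $\Rh_k\p{Nk + k + 1 + j}$ for $j = 0, 1, \ldots, k$ one position at a time. The deviation first appears at $j = 0$: the recurrence at $n = Nk + k + 1$ now includes the outer argument $N + 1$, and because $\Rh_k\p{N + 1} = N$ is one less than the naive identity value, the result is $\Rh_k\p{Nk + k + 1} = v_1$, which coincides with $\Rh_k\p{Nk + k}$ and creates a double straddling the phase boundary. Continuing the recurrence at each subsequent position, the sequence climbs by $1$ per step up through $\Rh_k\p{Nk + 2k - 1} = v_1 + k - 2$, stalls with $\Rh_k\p{Nk + 2k} = v_1 + k - 2$, and then jumps to $\Rh_k\p{Nk + 2k + 1} = v_1 + k$, producing the claimed difference of $2$. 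The structural reason is that the block of $k + 1$ consecutive $\Rh_k$-values at positions $Nk + k, Nk + k + 1, \ldots, Nk + 2k$ contains exactly two equal-value pairs, one at positions $Nk + k, Nk + k + 1$ with value $v_1$ and one at positions $Nk + 2k - 1, Nk + 2k$ with value $v_1 + k - 2$; these two stalls force two of the $k$ outer arguments in the recurrence for $\Rh_k\p{Nk + 2k + 1}$ to advance by $1$ relative to those for $\Rh_k\p{Nk + 2k}$, and at both advanced arguments $\Rh_k$ increases by $1$, so the recurrence sum jumps by $2$.

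The main obstacle is the bookkeeping in the irregular phase: each of the $k + 2$ fresh recurrence evaluations combines regular-phase values, initial-condition values near $N$ (with the off-by-one corrections $\Rh_k\p{N+1} = N$ and $\Rh_k\p{N+2} = N + 1$ coming from the first double), and just-computed irregular values. I would maintain a rolling table of the last $k$ values and tabulate the outer arguments together with their $\Rh_k$-images at each step, paying particular attention to transitions where a new outer argument first exceeds $N$ or where a new equal-value pair enters the sliding window.
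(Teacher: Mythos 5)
Your proposal takes essentially the same route as the paper's proof: establish by induction the self-similar regular phase (blocks of $k+1$ positions carrying $k$ values, one value doubled per block) up to position $Nk+k$, then observe that the next evaluations pull in the outer argument $N+1$ where $\Rh_k\p{N+1}=N$ breaks the identity pattern, and track the short irregular stretch position by position until the jump of $2$ at $Nk+2k+1$. All of your stated values --- $v_1=Nk-\frac{1}{2}\p{k-1}\p{k-2}$, the two stalls at $\p{Nk+k,\,Nk+k+1}$ and $\p{Nk+2k-1,\,Nk+2k}$, and the final value $v_1+k$ --- agree with the paper's computation.
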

\begin{proof}
For simplicity of notation, let $N=\frac{k^2+k}{2}$.  We will now show that, for
$1\leq r\leq k+1$ and $-k\leq qk+r<\p{N-k}\p{k+1}$
\[
\Rh_k\p{N+q\p{k+1}+r}=N+qk+r-1.
\]
We observe that the last $k$ terms of the initial condition correspond to $q=-1$ and $r=1$ through $r=k$.  These all satisfy $\Rh_k\p{N+q\p{k+1}+r}=N+qk+r-1$, as required.  We also have, when $q=-1$ and $r=k+1$,
\[
\Rh_k\p{N}=\sum_{i=1}^k\Rh_k\p{N-\Rh_k\p{N-i}}=\sum_{i=1}^k\Rh_k\p{N-\p{N-i}}=\sum_{i=1}^k\Rh_k\p{i}=\sum_{i=1}^k i=N,
\]
as required.

Now, let $q\p{k+1}+r>0$ and suppose inductively that
\[
\Rh_k\p{N+q'\p{k+1}+r'}=N+q'k+r'-1
\]
for all $-k\leq q'\p{k+1}+r'<q\p{k+1}+r<\p{N-k+1}\p{k+1}$.  We have
\begin{align*}
\Rh_k\p{N+q\p{k+1}+r}&=\sum_{i=1}^k\Rh_k\p{N+q\p{k+1}+r-\Rh_k\p{N+q\p{k+1}+r-i}}\\
&=\sum_{i=1}^{r-1}\Rh_k\p{N+q\p{k+1}+r-\Rh_k\p{N+q\p{k+1}+r-i}}\\&\hspace{0.2in}+\sum_{i=r}^k\Rh_k\p{N+q\p{k+1}+r-\Rh_k\p{N+q\p{k+1}+r-i}}\\
&=\sum_{i=1}^{r-1}\Rh_k\p{N+q\p{k+1}+r-\pb{N+qk+r-i-1}}\\&\hspace{0.2in}+\sum_{i=r}^k\Rh_k\p{N+q\p{k+1}+r-\pb{N+\p{q-1}k+\p{r-i+k+1}-1}}\\
&=\sum_{i=1}^{r-1}\Rh_k\p{q+i+1}+\sum_{i=r}^k\Rh_k\p{q+i}.
\end{align*}
Since $q\leq N-k$ (and, if $q=N-k$, then $r\leq k$), this equals
\begin{align*}
\sum_{i=1}^{r-1}\p{i+q+1}+\sum_{i=r}^k\p{i+q}=r-1+qk+\sum_{i=1}^k i=N+qk+r-1,
\end{align*}
as required.



Now, for simplicity of notation, let $A=\pb{N-k+1}\pb{k+1}$.  We will now show that, for $0\leq r\leq k-2$, $\Rh_k\p{N+A+r}=N+\pb{N-k+1}k+r-1$.  (Note that these values are $1$ less than they would be if the previous pattern continued.)  Inductively, suppose this holds for all $r'<r$.  
We now calculate
\begin{align*}
\Rh_k\p{N+A+r}&=\sum_{i=1}^k\Rh_k\p{N+A+r-\Rh_k\p{N+A+r-i}}\\
&=\sum_{i=1}^r\Rh_k\p{N+A+r-\Rh_k\p{N+A+r-i}}\\
&\hspace{0.2in}+\sum_{i=r+1}^k\Rh_k\p{N+A+r-\Rh_k\p{N+A+r-i}}\\
&=\sum_{i=1}^r\Rh_k\p{N+A+r-\pb{N+\pb{N-k+1}k+r-i-1}}\\
&\hspace{0.2in}+\sum_{i=r+1}^k\Rh_k\p{N+A+r-\pb{N+\pb{N-k}k+\pb{k+1+r-i}-1}}\\
&=\sum_{i=1}^r\Rh_k\p{A+i+1-\pb{N-k+1}k}+\sum_{i=r+1}^k\Rh_k\p{A+i-\pb{N-k+1}k}\\
&=\sum_{i=1}^r\Rh_k\p{N-k+i+2}+\sum_{i=r+1}^k\Rh_k\p{N-k+i+1}\\
&=N+\sum_{i=1}^r\pb{N-k+i+2}+\sum_{i=r+1}^{k-1}\pb{N-k+i+1}\\
&=Nk-\pb{k-1}k+r+\pb{k-1}+\frac{k^2-k}{2}\\
&=\pb{N-k+1}k+r-1+\pb{k+\frac{k^2-k}{2}}\\
&=N+\pb{N-k+1}k+r-1,
\end{align*}
as required.  The above calculation is also valid for $r=k-1$, except that $\Rh_k\p{N-k+i+2}$ would be $\Rh_k\p{N+1}$ when $i=k-1$.  Recall that $\Rh_k\p{N+1}=N$, rather than $N+1$.  So, we obtain $\Rh_k\p{N+A+k-1}=N+\p{N-k+1}k+k-3$.

We now compute
\begin{align*}
\Rh_k\p{N+A+k}&=\sum_{i=1}^k\Rh_k\p{N+A+k-\Rh_k\p{N+A+k-i}}\\
&=\Rh_k\p{N+A+k-\Rh_k\p{N+A+k-1}}\\
&\hspace{0.2in}+\sum_{i=2}^k\Rh_k\p{N+A+k-\Rh_k\p{N+A+k-i}}\\
&=\Rh_k\p{N+A+k-\pb{N+\pb{N-k+1}k+k-3}}\\
&\hspace{0.2in}+\sum_{i=2}^k\Rh_k\p{N+A+k-\pb{N+\pb{N-k+1}k+k-i-1}}\\
&=\Rh_k\p{A-\pb{N-k+1}k+3}+\sum_{i=2}^k\Rh_k\p{A-\pb{N-k+1}k+i+1}\\
&=\Rh_k\p{N-k+4}+\sum_{i=2}^k\Rh_k\p{N-k+i+2}\\
&=\Rh_k\p{N-k+4}+\sum_{i=2}^{k-2}\Rh_k\p{N-k+i+2}+\Rh_k\p{N+1}+\Rh_k\p{N+2}\\
&=N-k+4+\sum_{i=2}^{k-2}\pb{N-k+i+2}+2N+1\\
&=3N-k+5+\pb{k-3}N-k\p{k-3}+2\pb{k-3}+\pb{\frac{\pb{k-2}\pb{k-1}}{2}-1}\\
&=Nk-k+4-k\pb{k-3}+2\pb{k-3}+\frac{\pb{k-2}\pb{k-1}}{2}\\
&=Nk-k+4-k\pb{k-1}+2k+2k-6+\pb{N-k-\pb{k-1}}\\
&=N+\pb{N-k+1}k+k-1.
\end{align*}
(Observe that these calculations are only valid because $k\geq4$, as otherwise $N-k+4$ would be larger than $N$.)  So, we have $\Rh_k\p{N+A+k}=\Rh_k\p{N+A+k-1}+2$.  Recalling the values of $N$ and $A$, we have that $N+A+k=\frac{1}{2}k^3+\frac{1}{2}k^2+2k+1$, as required.
%
\end{proof}
We will now complete the proof of Theorem~\ref{thm:nope}.
\begin{proof}
Fix a positive integer $N$.  Consider the recurrence $\Rh_k$ with the initial condition $\Rh_k\p{i}=i$ for $1\leq i\leq N$.  Suppose that the sequence we obtain is slow.  Clearly, we need $N\geq k$, or else $\Rh_k\p{N+1}$ is undefined.  Supposing that $N\geq k$, we have
\begin{align*}
\Rh_k\p{N+1}&=\sum_{i=1}^k\Rh_k\p{N+1-\Rh_k\p{N+1-i}}\\
&=\sum_{i=1}^k\Rh_k\p{N+1-\pb{N+1-i}}\\
&=\sum_{i=1}^k\Rh_k\p{i}\\
&=\sum_{i=1}^k i\\
&=\frac{k^2+k}{2}.
\end{align*}
So, unless $N\in\st{\frac{k^2+k}{2}-1, \frac{k^2+k}{2}}$, we would not have $\Rh_k\p{N+1}-\Rh_k\p{N}\in\st{0,1}$.  According to Proposition~\ref{prop:jump}, $N=\frac{k^2+k}{2}-1$ does not result in a slow sequence for $k\geq4$.  Similarly, $N=\frac{k^2+k}{2}$ does not result in a slow sequence for $k\geq4$, as this sequence will be identical to the one for $N=\frac{k^2+k}{2}-1$ (since the first $N$ terms are the same).  So, we must have $N\leq 3$.  The case $N=1$ results in a trivial sequence, $N=2$ give the Hofstadter $Q$-sequence (which is not slow), and $N=3$ gives our $\Rh$-sequence.  Therefore, the $\Rh$-sequence is the only nontrivial slow sequence resulting from a recurrence $\Rh_k$ with an initial condition of the form $\Rh_k\p{i}=i$ for all $i\leq N$ for some $N$, as required.
\end{proof}

\section*{Acknowledgements}
I would like to thank Dr. Doron Zeilberger of Rutgers University for introducing me to this area of study and for providing me with useful feedback on this work.  I would also like to thank Richard Voepel of Rutgers University for proofreading a draft of this paper and providing me with useful feedback.

\bibliography{bibliography.bib}
\end{document}